\newtheorem{lemma}{Lemma}%[section]
\newtheorem{theorem}[lemma]{Theorem}
\newtheorem{corollary}[lemma]{Corollary}
\newtheorem{proposition}[lemma]{Proposition}
\newtheorem{conjecture}[lemma]{Conjecture}
\newtheorem{remark}[lemma]{Remark}
\newcommand*\xbar[1]{%
   \hbox{%
     \vbox{%
       \hrule height 0.5pt % The actual bar
       \kern0.5ex%         % Distance between bar and symbol
       \hbox{%
%         \kern-0.1em%      % Shortening on the left side
         \ensuremath{#1}%
%         \kern-0.1em%      % Shortening on the right side
       }%
     }%
   }%
}
\begin{document}
%%%%%%%%%%%%%%%%%%%%%%%%%%%%%%%%%%%%%%%%%%%%%%%%%%%%%%%%%%%
\title{The Undirected Optical Indices of Complete $m$-ary Trees}

\author{Yuan-Hsun Lo}
\address{School of Mathematical Sciences, Xiamen University, Xiamen 361005, China}
\email[Y.-H.~Lo]{yhlo0830@gmail.com}

\author{Hung-Lin Fu}
\address{Department of Applied Mathematics, National Chiao Tung University, Hsinchu 300, Taiwan, ROC}
\email[H.-L.~Fu]{hlfu@math.nctu.edu.tw}

\author{Yijin Zhang}
\address{School of Electronic and Optical Engineering, Nanjing University of Science and Technology, Nanjing, China} 
\address{National Mobile Communications Research Laboratory, Southeast University, Nanjing, China}
\email[Y.~Zhang]{yijin.zhang@gmail.com}

\author{Wing Shing Wong}
\address{Department of Information Engineering, the Chinese University of Hong Kong, Shatin, Hong Kong}
\email[W.~S.~Wong]{wswong@ie.cuhk.edu.hk}

\subjclass[2010]{05C05; 05C15; 05C90}

\keywords{optical index; forwarding index; all-to-all routing; wavelength assignment}

\thanks{This work was supported in part by the National Natural Science Foundation of China under grant numbers 61301107 and 11601454, the Natural Science Foundation of Fujian Province of China under grant number 2016J05021, the Fundamental Research Funds for the Central Universities in China under grant number 20720150210, the Ministry of Science and Technology, Taiwan under grant number 104-2115-M-009-009, and the open research fund of National Mobile Communications Research Laboratory, Southeast University, under grant number 2017D09.}

%\date{\today}

\maketitle
%%%%%%%%%%%%%%%%%%%%%%%%%%%%%%%%%%%%%%%%%%%%%%

\begin{abstract}
The routing and wavelength assignment problem arises from the investigation of optimal wavelength allocation in an optical network that employs Wavelength Division Multiplexing (WDM).
Consider an optical network that is represented by a connected, simple graph $G$. 
An all-to-all routing $R$ in $G$ is a set of paths connecting all pairs of vertices of $G$.
The undirected optical index of $G$ is the minimum integer $k$ to guarantee the existence of a mapping $\phi:R\to\{1,2,\ldots,k\}$, such that $\phi(P)\neq\phi(P')$ if $P$ and $P'$ have common edge(s), over all possible routings $R$.
A natural lower bound of the undirected optical index of $G$ is the (undirected) edge-forwarding index, which is defined to be the minimum of the maximum edge-load over all possible all-to-all routings.
In this paper, we first derive the exact value of the optical index of the complete $m$-ary trees, and then investigate the gap between undirected optical and edge-forwarding indices.
\end{abstract}

%%%%%%%%%%%%%%%%%%%%%%%%%%%%%%%%%%%%%%%%%%%%%%
%%%%%%%%%%%%%%%%%%%%%%%%%%%%%%%%%%%%%%%%%%%%%%
\section{Introduction} \label{sec:intro}
%%%%%%%%%%%%%%%%%%%%%%%%%%%%%%%%%%%%%%%%%%%%%%

%%%%%%%%%%%%%%%%%%%%%%%%%%%%%%%%%%%%%%%%%%%%%%%
%\subsection{Notation} \label{sec:intro_notation}
%%%%%%%%%%%%%%%%%%%%%%%%%%%%%%%%%%%%%%%%%%%%%%%
Let $G=(V(G),E(G))$ be a connected simple graph with vertex set $V(G)$ and edge set $E(G)$.
An edge with endpoints $u$ and $v$ is represented as $\{u,v\}$.
A sequence of edges $P=(\{v_1,v_2\},\{v_2,v_3\},\ldots,\{v_\ell,v_{\ell+1}\})$ such that $\{v_i,v_{i+1}\}\in E(G)$ and $v_i\neq v_j$ for $i\neq j$ is called a \emph{path} of length $\ell$, with terminal vertices $v_1$ and $v_{\ell+1}$.
Two paths $P$ and $P'$ are said to be \emph{conflicting} if there exists an edge $e\in E(G)$ such that $e\in P$ and $e\in P'$.
A \emph{routing} in $G$ is a set of paths.
An \emph{all-to-all routing} $R_A$ in $G$ is a set of paths connecting all pairs of vertices of $G$.
Note that $|R_A|={|V(G)|\choose 2}$.
Throughout this paper, we always consider all-to-all routings.
Hence, we simply use routing to refer to an all-to-all routing and use $R$ for $R_A$ for the sake of convenience.

Let $\mathfrak{R}_G$ denote the collection of all routings in $G$.
For a given routing $R$, define the \emph{conflict graph} $Q(R)$ as the graph with vertex set $R$ by $P$ and $P'$ being adjacent if and only if they conflict.
The \emph{chromatic number} of a graph $G$ is the smallest number of colors needed to color the vertices of $G$ so that no two adjacent vertices share the same color.
Then, define the \emph{undirected optical index} of $G$ as $$w(G):=\min_{R\in\mathfrak{R}_G}\chi(Q(R)).$$

Analogous parameters can be introduced when considering directed graphs.
A \emph{symmetric directed graph} (\emph{digraph}) $G=(V(G),A(G))$ is a directed graph with vertex set $V(G)$ and arc set $A(G)$ such that if $(u,v)\in A(G)$ then $(v,u)\in A(G)$.
On the other hand, a symmetric digraph is obtained from a graph by putting two opposite arcs on each edge.
A directed path (dipath) with source $s$ and destination $d$ is a sequence of arcs $\vec{P}=((s=v_1,v_2),(v_2,v_3),\ldots,(v_\ell,v_{\ell+1}=d))$, for some $\ell>1$, such that $(v_i,v_{i+1})\in A(G)$ and $v_i\neq v_j$ for $v\neq j$.
Two dipaths $\vec{P}$ and $\vec{P'}$ are said to be conflicting if there exists an arc $\vec{e}\in A(G)$ such that $\vec{e}\in\vec{P}$ and $\vec{e}\in\vec{P'}$.
A directed (all-to-all) routing $\overrightarrow{R}$ in $G$ is a set of dipaths connecting all ordered pairs of vertices of $G$.
Note that $|\vec{R}|=|V(G)|\cdot (|V(G)|-1)$.
Let $\vec{\mathfrak{R}}_G$ denote the collection of all directed routings in $G$.
The \emph{directed optical index} of $G$ can be defined as $$\vec{w}(G):=\min_{\vec{R}\in\vec{\mathfrak{R}}_G}\chi(Q(\vec{R})),$$
where $Q(\vec{R})$ is the conflict graph induced by $\vec{R}$.
Obviously, $\vec{w}(G)\leq w(G)$ for any graph $G$.

The evaluation of $\vec{w}(G)$ is known as the \emph{routing and wavelength assignment} (RWA) problem, which aries from the investigation of optimal wavelength allocation in an optical network \cite{Alexander93,SS12} that employs Wavelength Division Multiplexing (WDM).
For $a\in A(G)$, the \emph{load} of $a$, denoted by $\ell_{\vec{R}}(a)$, is the number of dipaths passing through $a$ under the directed routing $\vec{R}$.
The \emph{arc-forwarding index} $\vec{\pi}(G)$ \cite{HMS89} is defined by
$$\vec{\pi}(G):=\min_{\vec{R}\in\vec{\mathfrak{R}}_G} \max_{a\in A(G)}\ell_{\vec{R}}(a).$$
Similarly, by letting $\ell_R(e)$ be the load of an edge $e\in E(G)$ under the routing $R$, the \emph{edge-forwarding index} $\pi(G)$ can be defined accordingly by
$$\pi(G):=\min_{R\in\mathfrak{R}_G} \max_{e\in E(G)}\ell_R(e).$$
Notice that $\pi(G)$ is in fact the minimum of the maximum size of cliques in $Q(R)$, over all possible $R\in\mathfrak{R}_G$, and $\vec{\pi}(\vec{R})$ is its directed version.
Since the number of colors needed in a graph is not less than the maximum size of its cliques, we have
\begin{equation}\label{eq:forwarding_bound}
w(G)\geq \pi(G) \quad \text{and} \quad \vec{w}(G)\geq \vec{\pi}(G).
\end{equation}

\begin{remark}\rm
It is worth noting that, in the literature \cite{ART01,Beauquier99,BGPRV00,GMZ15} the undirected optical index and the edge-forwarding index are usually defined on the routings which are obtained from directed routings by replacing each dipath with a path.
That is, each considered routing consists of two paths for every pairs of vertices.
In such a way, in order to avoid potential confusion, we call them \emph{double undirected optical index}, \emph{double edge-forwarding index}, \emph{double undirected routing}, and denote by $w^d$, $\pi^d$, $R^d$, respectively.
It is not hard to see that, for any simple graph $G$, $$\frac{1}{2}w^d\leq\vec{w}(G)\leq w(G) \quad \text{and} \quad \frac{1}{2}\pi^d\leq\vec{\pi}(G)\leq\pi(G).$$
\end{remark}

The study of directed optical index and arc-forwarding index has been intensive in literature.
It has been proved that the equality $\vec{w}(G)=\vec{\pi}(G)$ holds for cycles~\cite{BGPRV00,Wilfong96}, trees~\cite{GHP97}, trees of cycles~\cite{BPT99}, some Cartesian product of paths or cycles with equal lengths~\cite{Beauquier99,SSV97}, some certain compound graphs~\cite{ART01}.
%, some families of circulant graphs~\cite{ART01,GMZ15}.
However, $\vec{w}(G)$ may be strictly larger than $\vec{\pi}(G)$ in some cases \cite{Kosowski09}.
Pleaase refers to \cite{CQ09,NOS01,LZCFW17,SSV97,WLLCCZ16} for more information on directed optical index and \cite{QZ04,Sole95,XX13} on arc-forwarding index.
There are relatively few results on double undirected or undirected optical index.
It has been shown that $w^d(G)=\pi^d(G)$ for cycles and hypercubes~\cite{BGPRV00}.
In \cite{LZWF15}, the authors showed $w(C_n)=\pi(C_n)+1$ when $4|n$, and $w(C_n)=\pi(C_n)$ when $4\nmid n$, where $C_n$ refers to the cycle of size $n$.

In this paper we focus on the undirected optical index.
After introducing two useful lower bounds in Section~\ref{sec:preli}, we in Section~\ref{sec:main} derive the exact value of $w(T_{m,h})$ for any $m,h\geq 1$, where $T_{m,h}$ denotes a complete $m$-ary tree with height $h$.
The result reveals that in the case when $m>1$ is an odd integer, $w(T_{m,h})$ is strictly larger than $\pi(T_{m,h})$.
Finally, we investigate the ratio $w(G)/\pi(G)$ in Section~\ref{sec:w-pi}.

%%%%%%%%%%%%%%%%%%%%%%%%%%%%%%%%%%%%%%%%%%%%%%%
%\subsection{Literature review} \label{sec:intro_review}
%%%%%%%%%%%%%%%%%%%%%%%%%%%%%%%%%%%%%%%%%%%%%%%
%$\overrightarrow{c}(G):$ maximum link load of $G$, or the edge forwarding index of $G$ \cite{QZ04,Sole95}.
%
%It is conjectured in \cite{BBGHPV97} that $\overrightarrow{w}(G)=\overrightarrow{c}(G)$ for any simple graph $G$.
%
%The following graph $G$ holds for the property $\overrightarrow{w}(G)=\overrightarrow{c}(G)$.
%\begin{enumerate}
%\item \cite{Wilfong96} Cycle $C_n$ for any $n$.
%\item \cite{GHP97} Arbitrary tree $T$.
%\item \cite{SSV97} $C_m\times C_n$ and $P_m\times P_n$ for any $m\neq n$.
%\item \cite{Beauquier99} $C_n\times\cdots\times C_n$ for $n$ even.
%\item \cite{Beauquier99,SSV97} $P_n\times\cdots\times P_n$ for $n$ even.
%\item \cite{Beauquier99} $K_{n_1}\times\cdots\times K_{n_k}$ for any $n_1,\ldots,n_k$.
%\end{enumerate}

%%%%%%%%%%%%%%%%%%%%%%%%%%%%%%%%%%%%%%%%%%%%%%%
%%%%%%%%%%%%%%%%%%%%%%%%%%%%%%%%%%%%%%%%%%%%%%%
\section{Preliminaries}\label{sec:preli}
%%%%%%%%%%%%%%%%%%%%%%%%%%%%%%%%%%%%%%%%%%%%%%%
The edge-forwarding index is a natural lower bound of undirected optical index.
In this section we will further introduce a lower bound of the edge-forwarding index, which is clearly to be a lower bound of undirected optical index.
Given $S,T\subseteq V(G)$, let $[S,T]$ denote the set of edges having one endpoint in $S$ and the other in $T$.
An \emph{edge cut} is an edge set that can be represented as the form $[S,\xbar{S}]$, where $S$ is a non-empty proper subset of $V(G)$ and $\xbar{S}$ denotes $V(G)\setminus S$.
The \emph{Edge-cut Bound} for the edge-forwarding index is as follows.

\begin{proposition}[Edge-cut Bound]\label{pro:edge_cut}
For any simple graph $G$, we have 
\begin{equation}\label{eq:edge_cut}
\pi(G)\geq\max_{C=[S,\bar{S}]}\frac{|S|\cdot|\xbar{S}|}{|C|}.
\end{equation}
\end{proposition}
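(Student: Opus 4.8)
The plan is to bound the total load carried by the edges of a single cut from below by counting the pairs of vertices that are forced to cross it, and then to average over the cut. Fix an arbitrary routing $R\in\mathfrak{R}_G$ and an arbitrary edge cut $C=[S,\xbar{S}]$. The starting observation is that every pair $\{u,v\}$ with $u\in S$ and $v\in\xbar{S}$ is joined in $R$ by a path that uses at least one edge of $C$. Indeed, write such a path as $P=(\{v_1,v_2\},\ldots,\{v_\ell,v_{\ell+1}\})$ with $v_1=u\in S$ and $v_{\ell+1}=v\in\xbar{S}$, and let $v_{i+1}$ be the first vertex along $P$ that lies in $\xbar{S}$; since $u=v_1\in S$ we have $i\geq 1$ and $v_i\in S$, so the edge $\{v_i,v_{i+1}\}$ has one endpoint in $S$ and the other in $\xbar{S}$ and hence belongs to $C$. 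There are exactly $|S|\cdot|\xbar{S}|$ such crossing pairs, each realized by a distinct path of $R$.

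Next I would double-count. Summing the loads over the edges of $C$ counts each path of $R$ once for every edge of $C$ it traverses, hence at least once for each of the crossing pairs identified above, giving
\begin{equation*}
\sum_{e\in C}\ell_R(e)\;\geq\;|S|\cdot|\xbar{S}|.
\end{equation*}
By averaging (pigeonhole), some edge $e^{\ast}\in C$ carries load at least the mean, so
\begin{equation*}
\max_{e\in E(G)}\ell_R(e)\;\geq\;\ell_R(e^{\ast})\;\geq\;\frac{|S|\cdot|\xbar{S}|}{|C|}.
\end{equation*}

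Finally, the right-hand side does not depend on $R$, so the inequality $\max_{e\in E(G)}\ell_R(e)\geq |S|\cdot|\xbar{S}|/|C|$ holds for \emph{every} routing; taking the minimum over $R\in\mathfrak{R}_G$ therefore yields $\pi(G)\geq |S|\cdot|\xbar{S}|/|C|$. Since the cut $C$ was arbitrary, maximizing over all cuts gives~\eqref{eq:edge_cut}. I do not anticipate a genuine obstacle here: the argument is a single averaging estimate, and the only points that need care are the order of the quantifiers — the per-edge bound must be established for an arbitrary fixed routing first, so that it survives the minimization defining $\pi(G)$ — together with the elementary observation that any path from $S$ to $\xbar{S}$ necessarily crosses the cut.
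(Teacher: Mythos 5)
Your argument is correct and is essentially the paper's own proof: both identify the $|S|\cdot|\xbar{S}|$ paths crossing the cut, note each must use at least one edge of $C$, and conclude by averaging over the $|C|$ edges. Your write-up merely makes explicit the crossing-edge observation and the quantifier order that the paper leaves implicit.
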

\begin{proof}
Assume $C=[S,\xbar{S}]$ is the edge cut that maximizes $\frac{|S|\cdot|\xbar{S}|}{|C|}$.
For any routing in $G$, there are $|S|\cdot|\xbar{S}|$ paths having one endpoint in $S$ and the other in $\xbar{S}$.
Hence the result follows by the fact that each of those paths uses at least one edge in $C$.
\end{proof}

Analogous to the Edge-cut Bound for the edge-forwarding index, we introduce the Vertex-cut Bound for the undirected optical index.
For a vertex set $S\subseteq V(G)$, denote by $G-S$ the graph induced by $V(G)\setminus S$.
A \emph{vertex cut} of a graph $G$ is a set $S\subseteq V(G)$ such that $G-S$ has more than one component.
Denote by $H_1^S,H_2^S,\ldots$ the components in $G-S$.
The \emph{Vertex-cut Bound} for the optical index is as follows.

\begin{proposition}[Vertex-cut Bound]\label{pro:vertex_cut}
For any simple graph $G$, we have
\begin{equation}
w(G)\geq\max_{S\subseteq V(G)}\frac{\sum_{i\neq j}|H_i^S|\cdot|H_j^S|}{\big\lfloor{|[S,\bar{S}]|}\big/{2}\big\rfloor}.
\label{eq:vertex_cut}
\end{equation}
\end{proposition}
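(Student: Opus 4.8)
The plan is to lower bound $\chi(Q(R))$ for an arbitrary routing $R\in\mathfrak{R}_G$; since $w(G)=\min_R\chi(Q(R))$ and the estimate will hold for every vertex cut $S$, taking the maximum over $S$ afterwards yields (\ref{eq:vertex_cut}). Fix a vertex cut $S$, put $C=[S,\bar{S}]$, and let $H_1^S,H_2^S,\ldots$ be the components of $G-S$. Call a path of $R$ \emph{crossing} if its two terminal vertices lie in distinct components of $G-S$. Because $R$ is all-to-all, each pair of vertices in two different components is joined by exactly one path, so the number of crossing paths equals $\sum_{i<j}|H_i^S|\cdot|H_j^S|$; this is the quantity in the numerator of (\ref{eq:vertex_cut}) (with $\sum_{i\neq j}$ read over unordered pairs).

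The crucial structural fact is that every crossing path traverses at least two edges of $C$. Indeed, a path with endpoints in $H_i^S$ and $H_j^S$ ($i\neq j$) must meet the cut $S$, since $H_i^S$ and $H_j^S$ belong to different components of $G-S$; walking along it, the path must leave the source component through some edge of $[S,\bar{S}]$ and later enter the destination component through another such edge, and these two edges are distinct, as their $\bar{S}$-endpoints lie in different components. Paths having an endpoint inside $S$ are simply not crossing and play no role in the count.

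I would then read a proper colouring of $Q(R)$ through the cut. Each colour class is an independent set of $Q(R)$, that is, a family of pairwise non-conflicting and hence pairwise \emph{edge-disjoint} paths. If a colour class contains $k$ crossing paths, the previous step shows they use at least $2k$ edges of $C$, and edge-disjointness makes these edges distinct, so $2k\leq|C|$ and the class carries at most $\lfloor|C|/2\rfloor$ crossing paths. Since every crossing path receives some colour, the $\chi(Q(R))$ classes must jointly accommodate all of them, whence $\chi(Q(R))\cdot\lfloor|C|/2\rfloor\geq\sum_{i<j}|H_i^S|\cdot|H_j^S|$. Dividing, and then optimising over $R$ and $S$, gives the result.

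The main point to get right --- and the reason this bound can beat the Edge-cut Bound of Proposition~\ref{pro:edge_cut} --- is the interaction between the ``two cut-edges per crossing path'' observation and the floor in the denominator: it is exactly the fact that a crossing path must both enter and leave $S$ that halves the per-colour capacity relative to a pure edge-load count, and the integrality $\lfloor|C|/2\rfloor$ must be carried through, since it is what makes (\ref{eq:vertex_cut}) tight in small cases such as stars. Everything else is bookkeeping; no case analysis on the tree structure is needed, as the argument concerns an arbitrary cut in an arbitrary graph.
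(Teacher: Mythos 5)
Your proof is correct and follows essentially the same argument as the paper: count the paths whose endpoints lie in distinct components of $G-S$, observe that each such path must use at least two distinct edges of $[S,\bar{S}]$, conclude that each colour class (being a family of pairwise edge-disjoint paths) contains at most $\lfloor|[S,\bar{S}]|/2\rfloor$ of them, and divide. Your reading of $\sum_{i\neq j}$ as a sum over unordered pairs matches the paper's intended usage, as its applications (e.g.\ to $T_{2,h}$ and $G_{k,t}$) confirm.
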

\begin{proof}
Let $R$ be a routing in $G$, and let $S$ be a vertex cut.
There are $\sum_{i\neq j}|H_i^S|\cdot|H_j^S|$ paths in $R$ whose endpoints are in distinct components in $G-S$.
Since each of such paths uses at least two edges in $[S,\xbar{S}]$, at most $\lfloor[S,\xbar{S}]\big/2\rfloor$ paths can receive the same color.
Then we have $$\chi(Q(R))\geq \frac{\sum_{i\neq j}|H_i^S|\cdot|H_j^S|}{\big\lfloor{|[S,\bar{S}]|}\big/{2}\big\rfloor},$$ and hence the result follows.
\end{proof}

%%%%%%%%%%%%%%%%%%%%%%%%%%%%%%%%%%%%%%%%%%%%%%%
%%%%%%%%%%%%%%%%%%%%%%%%%%%%%%%%%%%%%%%%%%%%%%%
\section{Main Results}\label{sec:main}
%%%%%%%%%%%%%%%%%%%%%%%%%%%%%%%%%%%%%%%%%%%%%%%
For $m,h\geq 1$, let $T_{m,h}$ be a complete $m$-ary tree with height $h$.
%For $k=0,1,\ldots,h$ let $L_k$ be the set of vertices of $T_{q,h}$ lying on the $k$-th layer from up to down, 
Let $r$ denote the root of $T_{m,h}$, $r_1,r_2,\ldots,r_m$ denote the children of $r$, $r_{i,1}, r_{i,2},\ldots, r_{i,m}$ denote the children of $r_i$ for $1\leq i\leq m$, $r_{i,j,1}, r_{i,j,2},\ldots, r_{i,j,m}$ denote the children of $r_{i,j}$ for $1\leq i,j\leq m$, and so on.
More precisely, the vertex lying on the $k$-th layer is denoted by $r_w$, where $w=t_1,t_2,\ldots,t_k$ is a $k$-tuple with $1\leq t_i\leq m$, and $r_{w'}$ is a child of $r_w$ if and only if $w'=w,t_{k+1}$ for some $t_{k+1}\in\{1,2,\ldots,m\}$.
Let $e_w$ denote the edge connecting the vertex $r_w$ with its parent.
Moreover, we use $H_w$ to denote the subtree rooted by $r_w$.
In particular, $H=T_{m,h}$.
See Fig.~\ref{fig:T33} for the example of $T_{3,3}$.

\begin{figure}[h]
\centering
\includegraphics[width=4.5in]{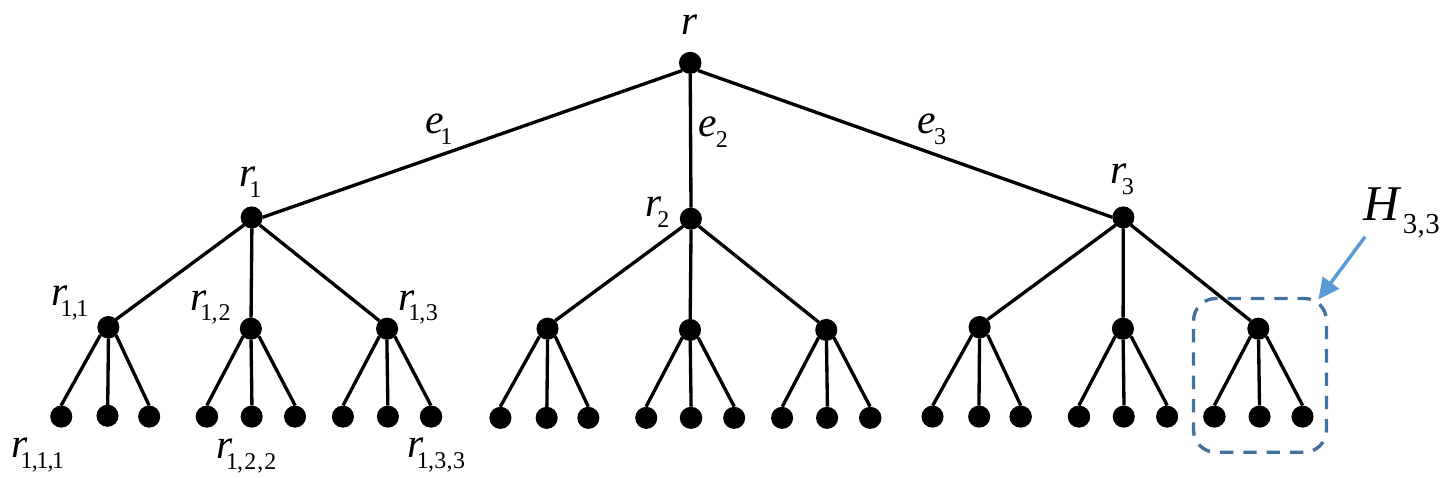}
\caption{$T_{3,3}$.} \label{fig:T33}
\end{figure}

The main result of this paper is to give the explicit value of optical index of $T_{m,h}$, for any positive integers $m$ and $h$.

\begin{theorem}\label{thm:m-ary}
We have
\begin{enumerate}[(i)]
\item $w(T_{1,h})=\lfloor\frac{h+1}{2}\rfloor\lceil\frac{h+1}{2}\rceil$;
\item $w(T_{2,1})=2$, $w(T_{2,2})=12$, and $w(T_{2,h})=5\cdot 2^{2h-2}-3\cdot 2^h+1$ for $h\geq 3$;
\item If $m$ is odd and $m\geq 3$, $w(T_{m,h})=m(1+m+m^2+\cdots+m^{h-1})^2$; and
\item If $m$ is even and $m\geq 4$, $w(T_{m,h})=m^h(1+m+m^2+\cdots+m^{h-1})$.
\end{enumerate}
\end{theorem}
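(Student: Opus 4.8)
The plan is to exploit that a tree has a unique path between any two vertices, so the all-to-all routing $R$ is forced, $\mathfrak{R}_{T_{m,h}}=\{R\}$, and hence $w(T_{m,h})=\chi(Q(R))$ is literally the chromatic number of one explicit conflict graph; the minimisation in the definition of $w$ disappears and the theorem becomes a (large) chromatic-number computation. Throughout write $L:=1+m+\cdots+m^{h-1}=|H_{r_i}|$, so the tree has $N=1+mL$ vertices and $N-L=m^h$. For the lower bounds I would feed the obvious cuts into Section~\ref{sec:preli}. The single-edge cut $C=\{e_{r_i}\}$ gives, via Proposition~\ref{pro:edge_cut}, $\pi\ge |H_{r_i}|\,(N-|H_{r_i}|)=m^hL$, and this is the largest edge-load in the tree; that settles the lower bound in the even case (iv). For odd $m$ I would instead apply Proposition~\ref{pro:vertex_cut} to $S=\{r\}$: the $m$ components are the equal-sized subtrees $H_{r_1},\dots,H_{r_m}$, the boundary consists of the $m$ root edges, and since $\lfloor m/2\rfloor=(m-1)/2$ the $\binom m2L^2$ cross pairs yield $\binom m2L^2/\tfrac{m-1}2=mL^2$, matching (iii). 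Case (i) is soft and separate: for the path $T_{1,h}=P_{h+1}$ the paths are edge-intervals, so $Q(R)$ is an interval graph, hence perfect, and $\chi=\omega=\pi=\lfloor\frac{h+1}2\rfloor\lceil\frac{h+1}2\rceil$.

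The substance is the matching upper bound, which I would build by induction on $h$. View $T_{m,h}$ as a root $r$ carrying $m$ pendant copies $H_{r_1},\dots,H_{r_m}$ of $T_{m,h-1}$, and split the paths into three types: (A) paths inside a single $H_{r_i}$; (B) paths from $r$ into some $H_{r_i}$; and (C) cross paths between $H_{r_i}$ and $H_{r_j}$ with $i\neq j$. The organising observation is that two cross paths conflict if and only if they meet a common port $i$ (that is, share a root edge $e_{r_i}$): disjoint port pairs are edge-disjoint, and two cross paths through the same port already conflict at $r$, so any deeper overlap inside $H_{r_i}$ adds no constraint. Thus the cross paths form exactly the line graph of the multigraph $L^2K_m$ (each of the $\binom m2$ port pairs carrying $L^2$ parallel connections), whose chromatic index is $mL^2$ for odd $m$ and $(m-1)L^2$ for even $m$. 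I would colour them by an optimal (near-)$1$-factorisation of $L^2K_m$; this is precisely where the parity of $m$ enters, the defect $\chi'(K_m)=m$ versus $m-1$ being what forces $w>\pi$ in (iii).

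It then remains to absorb types (A) and (B) into the same palette. A type-(B) path into $H_{r_i}$ shares $e_{r_i}$ with every cross path through port $i$, so it must avoid the colours saturating that port; in the odd case the near-$1$-factorisation leaves exactly $L^2$ colours free at each port, and (as already in the base case $h=1$) the single port missed by each near-factor is exactly what lets the root-to-subtree paths be absorbed at no extra cost. Restricted to a fixed $H_{r_i}$, the cross paths deposit at every vertex $v$ a bundle of $(m-1)L$ pre-coloured rays from $v$ to $r_i$, pairwise of distinct colours, and I must extend this partial colouring to the type-(A) paths of $H_{r_i}$ and to the one type-(B) ray at each $v$. Since this is the same shape of problem one level down, I would phrase a strengthened induction hypothesis for a rooted $m$-ary tree whose vertices each carry a prescribed number of through-rays to the root, and check that the bundles handed down by the root-level factorisation meet its requirements (right multiplicity, distinctness, enough free colours). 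The main obstacle is exactly this bookkeeping: proving the extension can always be realised inside the budget $mL^2$ (resp.\ $m^hL$), with the tight packing dictated by the matching structure, rather than merely inside some larger number of colours.

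Finally I would handle the binary case $m=2$, and the small trees $T_{2,1},T_{2,2}$, on their own, because there none of the generic estimates is tight: the edge-cut and root vertex-cut bounds give only $\pi=2^{2h}-2^h$ and $(2^h-1)^2$, both strictly below the claimed $w(T_{2,h})=5\cdot2^{2h-2}-3\cdot2^h+1$, and the surplus $w-\pi$ is positive and grows with $h$ (equalling $1$ only at $h=3$). Pinning the value down needs a sharper, parity-sensitive lower bound on $\chi(Q(R))$ --- obtained by refining the vertex-cut counting to track how few paths of each colour can pass near the root, in the spirit of the $4\mid n$ phenomenon known for cycles --- matched by an explicit, hand-tailored colouring.
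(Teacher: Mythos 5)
Your lower bounds for cases (i), (iii) and (iv) match the paper's (edge cut $\{e_1\}$ for even $m$, vertex cut $\{r\}$ for odd $m$, interval graphs for the path), but the proposal has genuine gaps in the remaining steps. The most concrete one is case (ii): you assert that the edge-cut bound and the root vertex-cut bound are both strictly below $5\cdot 2^{2h-2}-3\cdot 2^h+1$ and that a new ``sharper, parity-sensitive'' bound must be invented. In fact Proposition~\ref{pro:vertex_cut} already suffices --- you just applied it at the wrong vertex. Taking $S=\{r_1\}$ (a child of the root) splits $T_{2,h}$ into $H_{1,1}$, $H_{1,2}$ and $r+H_2$ with $|[S,\xbar{S}]|=3$, so the denominator is $\lfloor 3/2\rfloor=1$ and the bound evaluates exactly to $5\cdot 2^{2h-2}-3\cdot 2^h+1$. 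Having misdiagnosed the lower bound, you also give no construction for the matching upper bound, which in the paper is a nontrivial inductive ``canonical colouring'' built from a nine-class partition of the paths through $e_1$; ``an explicit, hand-tailored colouring'' is a placeholder, not a proof.

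For the upper bounds in (iii) and (iv), your line-graph-of-$L^2K_m$ picture of the cross paths is correct, but the absorption of the within-subtree and root-to-subtree paths --- which you yourself flag as ``the main obstacle'' --- is exactly the content of the proof and is left undone. Note that the paper avoids your recursive partial-colouring-extension machinery entirely. For odd $m$ it uses a $k$-total-colouring of $K_m$ (Lemma~\ref{lem:total-coloring}): the cross class $\mathcal{P}_{(i,j)}$ takes all distinct colours from $C_{f(e_{\{i,j\}})}$ and the entire class $\mathcal{P}_i$ (all paths with both ends in $V(G_i)\cup\{r\}$) takes all distinct colours from $C_{f(v_i)}$, which fits because $|\mathcal{P}_i|=\binom{t+1}{2}\leq t^2=|C_{f(v_i)}|$; no induction and no reuse of cross-path colours inside subtrees is needed, so the bookkeeping you worry about simply does not arise. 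For even $m$ the paper does need to reuse colours inside subtrees, and it does so via an auxiliary ``double tree'' $D_{m,h}$ (two copies of $T_{m,h-1}$ joined at the roots) with its own inductive bound $w(D_{m,h})\leq(1+m+\cdots+m^{h-1})^2$, pairing each subtree $H_i$ with the partner $j$ for which $f(e_{\{i,j\}})=1$ in an $(m-1)$-edge-colouring of $K_m$. Your scheme is plausibly completable along similar lines, but as written the decisive steps are announced rather than carried out.
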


Since $T_{1,h}$ is identical to a path of length $h$, the conflict graph $Q(R)$ of the unique routing $R$ can be viewed as the interval graph of intervals $(i,j)$, where $1\leq i<j\leq h+1$ and $i,j\in\mathbb{N}$.
By the well known result that the chromatic number of an interval graph is equal to the maximum size of its cliques, we have $w(T_{1,h})=\pi(T_{1,h})=\lfloor\frac{h+1}{2}\rfloor\lceil\frac{h+1}{2}\rceil$.

The proof of the rest three cases of Theorem~\ref{thm:m-ary} is put in the following subsections.
If there is no danger of confusion, we assume $R$ denote the unique routing of $T_{m,h}$, as there is only one path that connects any pair of vertices.

\subsection{Proof of Theorem~\ref{thm:m-ary}(ii)}
%%%%%%%%%%%%%%%%%%%%%%%%%%%%%%%%%%%%%%%%%%%%%%%

First, we consider the lower bound of $w(T_{2,h})$ by both Edge-cut Bound and Vertex-cut Bound.

Consider the edge cut $\{e_1\}$.
By the Edge-cut Bound we have
\begin{equation}\label{eq:T_2-1}
w(T_{2,h})\geq \pi(T_{2,h}) \geq 2^h(1+2+2^2+\cdots+2^{h-1}) = 2^{2h}-2^h.
\end{equation}
Consider the vertex cut $S=\{r_1\}$.
Then $T_{2,h}-S$ has three components: $H_{1,1},H_{1,2}$ and $r+H_2$.
Since $|H_{1,1}|=|H_{1,2}|=2^{h-1}-1$ and $|r+H_2|=2^h$, by the Vertex-cut Bound we have 
\begin{align}
w(T_{2,h}) &\geq \frac{|H_{1,1}|\cdot|H_{1,2}|+|H_{1,1}|\cdot|r+H_2|+|H_{1,2}|\cdot|r+H_2|}{\lfloor 3/2\rfloor} \notag \\
&= 5\cdot 2^{2h-2}-3\cdot 2^h+1. \label{eq:T_2-2}
\end{align}
Since $5\cdot 2^{2h-2}-3\cdot 2^h+1>2^{2h}-2^h$ for $h\geq 3$, combining \eqref{eq:T_2-1} and \eqref{eq:T_2-2} yields
\begin{equation} \label{eq:T_2-3}
w(T_{2,h})\geq \begin{cases}
2 & \text{if } h=1; \\
12 & \text{if } h=2; \text{ and}\\
5\cdot 2^{2h-2}-3\cdot 2^h+1, & \text{if } h\geq 3.
\end{cases}
\end{equation}

Now, we shall inductively define a coloring $\phi$ on $Q(R)$ by using the number of colors as shown in \eqref{eq:T_2-3}.
As the colorings for $h=1$ and $h=2$ are shown in Table~\ref{table:T_2-1} and Table~\ref{table:T_2-2}, respectively, in what follows we consider $h\geq 3$.

\begin{table}[h]
\begin{tabular}{c|l}
\hline
color & path(s) \\ \hline
1 & $\{r,r_1\}$, $\{r,r_2\}$ \\
2 & $\{r_1,r_2\}$ \\ \hline
\end{tabular}
\caption{$2$-coloring on $Q(R)$ for $T_{2,1}$.}
\label{table:T_2-1}
\end{table}

\begin{table}[h]
\begin{tabular}{c|l||c|l}
\hline
color & path(s) & symbol & path(s) \\ \hline
1 & $\{r,r_1\}$, $\{r,r_2\}$ & 7 & $\{r_{1,1},r_2\}$ \\
2 & $\{r,r_{1,1}\}$, $\{r,r_{2,1}\}$ & 8 & $\{r_{1,1},r_{2,1}\}$ \\ 
3 & $\{r,r_{1,2}\}$, $\{r,r_{2,2}\}$ & 9 & $\{r_{1,1},r_{2,2}\}$ \\ 
4 & $\{r_1,r_2\}$, $\{r_{1,1},r_{1,2}\}$, $\{r_{2,1},r_{2,2}\}$ & 10 & $\{r_{1,2},r_2\}$ \\ 
5 & $\{r_1,r_{2,1}\}$, $\{r_1,r_{1,2}\}$, $\{r_2,r_{2,2}\}$ & 11 & $\{r_{1,2},r_{2,1}\}$ \\ 
6 & $\{r_1,r_{2,2}\}$, $\{r_1,r_{1,1}\}$, $\{r_2,r_{2,1}\}$ & 12 & $\{r_{1,1},r_{2,2}\}$ \\ \hline
\end{tabular}
\caption{$12$-coloring on $Q(R)$ for $T_{2,2}$.}
\label{table:T_2-2}
\end{table}

By \eqref{eq:T_2-1}, there are $2^{2h}-2^h$ paths containing the edge $e_1$.
We partition these paths into the following $9$ classes.
See Fig.~\ref{fig:T2h} for the structure of $T_{2,h}$.

\begin{figure}[h]
\centering
\includegraphics[width=3in]{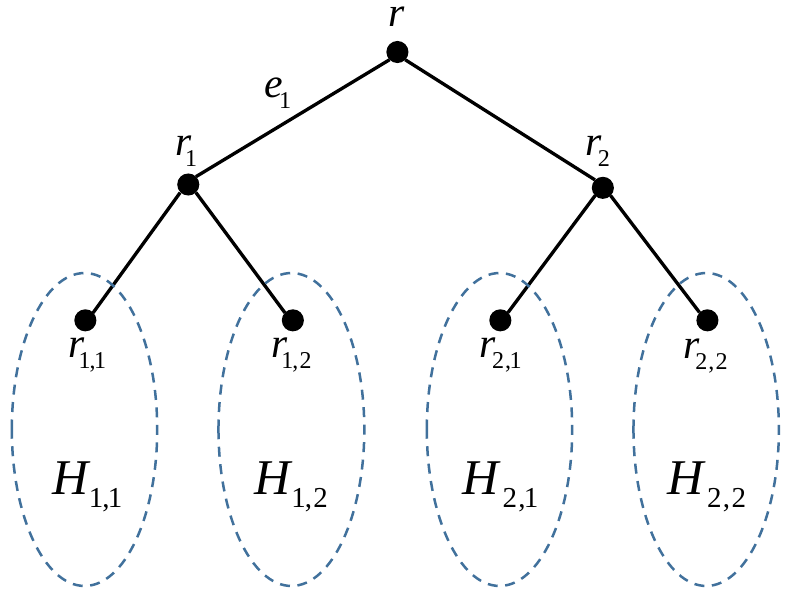}
\caption{An illustration of $T_{2,h}$.} \label{fig:T2h}
\end{figure}

\begin{enumerate}[{Class} 1.]
\item The path connecting $r$ and $r_1$, i.e., the edge $e_1$.
\item The path connecting $r_1$ and $r_2$.
\item The path connecting $r$ and one vertex in $H_{1,1}$ or $H_{1,2}$.
\item The path connecting $r_1$ and one vertex in $H_{2,1}$ or $H_{2,2}$.
\item The path connecting $r_2$ and one vertex in $H_{1,1}$ or $H_{1,2}$.
\item The path having one terminal vertex in $H_{1,1}$ and the other in $H_{2,1}$.
\item The path having one terminal vertex in $H_{1,1}$ and the other in $H_{2,2}$.
\item The path having one terminal vertex in $H_{1,2}$ and the other in $H_{2,1}$.
\item The path having one terminal vertex in $H_{1,2}$ and the other in $H_{2,2}$.
\end{enumerate}

We denote by $A_i$, $i=1,2,\ldots,9$, the set of paths in Class $i$.
One can check that $|\biguplus_{i=1}^9 A_i|=2^{2h}-2^h$.
%One can check that there are exactly $2^{2h}-2^h$ paths in total from Class 1--9.
The objective coloring is as follows.

\noindent
{\bf Canonical coloring for $T_{2,h}$.}
\begin{enumerate}[Step 1.]
\item Each path in $A_i$, $i=1,2,\ldots,9$, receives a distinct color. Denote the color sets by $C_1,C_2,\ldots,C_9$, respectively.
\item Assign the colors in $\biguplus C_i$ to as many remaining paths as possible. 
In other words, for each $i\in\{1,2,\ldots,9\}$, pick as many mutually edge-disjoint paths which are not included in $A_i$ as possible.
\item Each of the remaining paths after Step 2 receives a new color.
\end{enumerate}
Table~\ref{table:T_2-h} shows how Step 2 above is accomplished.
For convenience, denote by $\mathcal{P}_1$ ($\mathcal{P}_2$, resp.) be the set of paths having one terminal vertex in $H_{1,1}$ ($H_{2,1}$, resp.) and the other in $H_{1,2}$ ($H_{2,2}$, resp.).
Note that paths in $\mathcal{P}_1$ should receive distinct colors, since each of them contains edges $e_{1,1}$ and $e_{1,2}$.
Similarly, all paths in $\mathcal{P}_2$ should receive distinct colors.

\begin{table}[h]
\begin{tabular}{c|l|l}
\hline
$i$ & $|C_i|$ & Chosen edge-disjoint paths not included in $A_i$ \\ \hline
1 & $1$ & $\{r,r_2\}$, one path from $\mathcal{P}_1$ and one from $\mathcal{P}_2$ \\
2 & $1$ & one path from $\mathcal{P}_1$ and one from $\mathcal{P}_2$ \\ 
3 & $2^h-2$ & $\{\{r,r_w\}:\,r_w\text{ is in } H_{2,1}\text{ or }H_{2,2}\}$ \\
4 & $2^h-2$ & any $2^h-2$ paths from $\mathcal{P}_1$ \\ 
5 & $2^h-2$ & any $2^h-2$ paths from $\mathcal{P}_2$ \\ 
6 & $2^{2h-2}-2^h+1$ & paths with terminal vertices in $r_1+H_{1,2}$ \\ 
7 & $2^{2h-2}-2^h+1$ & paths with terminal vertices in $r_2+H_{2,1}$ \\ 
8 & $2^{2h-2}-2^h+1$ & paths with terminal vertices in $r_2+H_{2,2}$ \\ 
9 & $2^{2h-2}-2^h+1$ & paths with terminal vertices in $r_1+H_{1,1}$ \\ \hline
\end{tabular}
\caption{Edge-disjoint paths added to each class.}
\label{table:T_2-h}
\end{table}

Following Table~\ref{table:T_2-h}, as the number of edge-disjoint paths added for $A_i$, $1\leq i\leq 5$, is equal to $|C_i|$, we only need to claim that, for $i=6,7,8,9$, the number $|C_i|=2^{2h-2}-2^h+1$ of colors is enough for the extra paths we added for $A_i$.
Without loss of generality, we consider the case that $A_6$.
Since $H_{1,2}$ is isomorphic to $T_{2,h-2}$ and $|V(T_{2,h-2})|=2^{h-1}-1$, it suffices to show $$(2^{2h-2}-2^h+1) - w(T_{2,h-2}) \geq 2^{h-1}-1.$$
This inequality always holds for $h\geq 3$ due to $w(T_{2,1})=2$, $w(T_{2,2})=12$ and the induction hypothesis that $w(T_{2,h-2})=5\cdot 2^{2h-6}-3\cdot 2^{h-2}+1$ for $h\geq 5$.

After the Step 2 of the Canonical Coloring for $T_{2,h}$, all the paths left are either in $\mathcal{P}_1$ or $\mathcal{P}_2$.
By ignoring those paths that have received colors in Step 2, there are
%In Step 2, $2^h$ paths in $\mathcal{P}_1$ (and $\mathcal{P}_2$) have received symbols.
%So, in Step 3, it needs extra 
\begin{equation}\label{eq:T_2h-step3}
(2^{h-1}-1)^2-2^h=2^{2h-2}-2\cdot 2^{h}+1
\end{equation}
paths in $\mathcal{P}_1$ (and $\mathcal{P}_2$) left in Step 3.
Since Step 1 and Step 2 cost 
\begin{equation}\label{eq:T_2h-step12}
\sum_{i=1}^9|C_i|=4\cdot 2^{2h-2}-2^h
\end{equation}
colors, combining \eqref{eq:T_2h-step3} and \eqref{eq:T_2h-step12} implies that the Canonical Coloring for $T_{2,h}$ valid by using $5\cdot 2^{2h-2}-3\cdot 2^h+1$ colors.
This completes the proof.
\qed

%%%%%%%%%%%%%%%%%%%%%%%%%%%%%%%%%%%%%%%%%%%%%%%
\subsection{Proof of Theorem~\ref{thm:m-ary}(iii)}
%%%%%%%%%%%%%%%%%%%%%%%%%%%%%%%%%%%%%%%%%%%%%%%

In this part, we further consider a more general case.
For any two positive integers $k$ and $t$, let $G_{k,t}$ be a simple graph with a cut vertex, say $r$, such that $G_{k,t}-r$ has $k$ components, each of which contains $t$ vertices and connects to $r$ by one edge.
In other words, the root $r$ is of degree $k$ in $G_{k,t}$.
%has exactly one neighbour in each of the components in $G_{k,t}-r$.
$T_{m,h}$ turns out to be a special case of $G_{k,t}$ by plugging $k=m$ and $t=\sum_{i=0}^{h-1}m^i$.
To prove Theorem~\ref{thm:m-ary}(iii), therefore, it suffices to show the following.

\begin{lemma}
For any odd integer $k\geq 3$ and any positive integer $t$, we have $$w(G_{k,t})=kt^2.$$
\label{lem:G_kt}
\end{lemma}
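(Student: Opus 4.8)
The plan is to prove Lemma~\ref{lem:G_kt} by establishing matching lower and upper bounds on $w(G_{k,t})=kt^2$. For the lower bound, I would apply the Vertex-cut Bound (Proposition~\ref{pro:vertex_cut}) with $S=\{r\}$, the single cut vertex. Then $G_{k,t}-S$ has exactly $k$ components, each of size $t$, and the edge cut $[S,\xbar{S}]$ consists of the $k$ edges joining $r$ to the components, so $|[S,\xbar{S}]|=k$. The number of cross-component pairs is $\sum_{i\neq j}|H_i^S|\cdot|H_j^S|=\binom{k}{2}t^2\cdot 2=k(k-1)t^2$ (counting ordered pairs) or $\binom{k}{2}t^2$ for unordered; since the bound as stated sums over $i\neq j$, this gives $k(k-1)t^2$. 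Because $k$ is odd, $\lfloor k/2\rfloor=(k-1)/2$, and the Vertex-cut Bound yields $w(G_{k,t})\geq \frac{k(k-1)t^2}{(k-1)/2}=2kt^2$. This overshoots the target, so I must be careful: the correct count of unordered cross pairs is $\binom{k}{2}t^2=\frac{k(k-1)}{2}t^2$, giving exactly $\frac{k(k-1)t^2/2}{(k-1)/2}=kt^2$. Thus the lower bound $w(G_{k,t})\geq kt^2$ follows cleanly from the parity of $k$.

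For the upper bound I need to exhibit a valid coloring of the conflict graph using exactly $kt^2$ colors. The key structural observation is that every path either stays inside one component (these conflict only with paths through the same edge $e_i$ and inside that component) or crosses $r$ between two distinct components (using exactly two of the $k$ cut edges). I would organize the coloring around the $k$ cut edges $e_1,\ldots,e_k$. A natural strategy is to treat the cross-component paths via a round-robin or one-factorization-type scheme: since $k$ is odd, the complete graph $K_k$ on the components admits a proper edge-coloring where each color class is a near-perfect matching of size $(k-1)/2$, and correspondingly pairs of cut edges can be bundled so that at most $(k-1)/2$ disjoint cross-paths share a color. I would then allocate $t^2$ colors to each of a suitable collection of $k$ ``slots,'' matching the $kt^2$ budget, and carefully assign the within-component paths (recursively, since each component may itself be a smaller $G_{k',t'}$ or a tree) while reusing colors that are free on the relevant edges.

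The main obstacle will be verifying the upper bound: ensuring the $kt^2$ colors suffice simultaneously for the within-component paths and the cross-component paths without conflict. Analogously to the Canonical Coloring constructed for $T_{2,h}$, I would proceed in stages — first assign colors forced by the cut edges (the cross-component paths, which are the bottleneck clique), then greedily pack the remaining within-component and root-incident paths into the already-used color classes by choosing edge-disjoint representatives, and finally confirm no new colors are needed. The delicate accounting is to show that the number of within-component paths of $H_i$, together with the paths from $r$ into each component, can be absorbed into the spare capacity of the $kt^2$ color classes; this likely requires an inductive hypothesis on $w(G_{k,t})$ (or on the optical index of the components) combined with a counting inequality showing the leftover capacity $kt^2 - (\text{cross-path count})$ dominates the within-component demand. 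Establishing that this inequality holds for all odd $k\geq 3$ and all $t$, and handling the parity of $k$ correctly so that each color class carries its full $(k-1)/2$ cross-paths, is where the real work lies.
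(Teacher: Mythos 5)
Your lower bound is exactly the paper's: the Vertex-cut Bound at $S=\{r\}$ with $|[S,\xbar{S}]|=k$ and $\binom{k}{2}t^2$ cross-component pairs gives $w(G_{k,t})\geq\binom{k}{2}t^2\big/\frac{k-1}{2}=kt^2$ (and you correctly resolve the ordered-versus-unordered ambiguity in the statement of Proposition~\ref{pro:vertex_cut} in favor of the unordered count, which is how the paper applies it).

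The upper bound, however, is left genuinely open in your proposal. You set up the right skeleton --- $k$ color classes of $t^2$ colors each, with the $\binom{k}{2}$ cross-component path families $\mathcal{P}_{(i,j)}$ distributed via a proper $k$-edge-coloring of $K_k$ (each class a near-perfect matching of size $(k-1)/2$) --- but you never say how the within-component paths and the paths incident to $r$ are absorbed, and you defer to a recursive/greedy ``spare capacity'' argument that you acknowledge you have not carried out. The missing idea is small but essential: since $k$ is odd, $K_k$ has a $k$-\emph{total}-coloring $f$ (Lemma~\ref{lem:total-coloring}), equivalently, the vertex $v_i$ can be given the unique color missing from the near-perfect matching at $v_i$. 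Group all paths with both endpoints in $V(G_i)\cup\{r\}$ into $\mathcal{P}_i$ and assign them distinct colors from $C_{f(v_i)}$; there are only $\binom{t+1}{2}\leq t^2$ of them, so they fit, and the total-coloring property guarantees $f(v_i)\neq f(e_{\{i,j\}})$ for every $j$, so these paths never share a class with a cross-path touching $G_i$, while classes shared between non-incident objects of $K_k$ correspond to edge-disjoint path families in $G_{k,t}$. With this assignment the fit is exact and immediate: no induction on the components, no counting of leftover capacity, and no separate treatment of the root-incident paths (they live inside the sets $\mathcal{P}_i$) is needed. As written, your proposal does not establish $w(G_{k,t})\leq kt^2$.
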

\noindent\emph{Proof of Lemma~\ref{lem:G_kt}.}
Denote by $G_1,G_2,\ldots,G_k$ the $k$ components of $G_{k,t}-r$.
So we have $|V(G_1)|=|V(G_2)|=\cdots=|V(G_k)|=t$.
Consider the vertex cut $S=\{r\}$. 
Since there is only one edge connecting $r$ and each of the $k$ components, $|[S,\bar{S}]|=k$.
By the assumption that $k$ is odd, it follows from the Vertex-cut Bound that
%Since $k$ is an odd integer, by the Vertex-cut Bound, we have
\begin{equation}
w(G_{k,t})\geq \frac{{k\choose 2}t^2}{(k-1)/2} = kt^2.
\label{eq:G_kt_lower}
\end{equation}

To prove $w(G_{k,t})\leq kt^2$, we revisit the concept of total coloring of a graph.
A \emph{$k$-total-coloring} of a graph $G$ is a mapping from $V(G)\cup E(G)$ into a set of colors $\{1,2,\ldots,k\}$ such that (i) adjacent vertices in $G$ receive distinct colors, (ii) incident edges in $G$ receive distinct colors, and (iii) any vertex and its incident edges receive distinct colors.
The \emph{total chromatic number} $\chi''(G)$ of a graph $G$ is the minimum number $k$ for which $G$ has a $k$-total-coloring.
The following result is known.

\begin{lemma}[\cite{Yap96}, p.16]
If $n$ is an odd positive integer, then $\chi''(K_n)=n$.
\label{lem:total-coloring}
\end{lemma}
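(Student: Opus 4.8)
The plan is to prove the two inequalities $\chi''(K_n)\geq n$ and $\chi''(K_n)\leq n$ separately, the second being the substantive direction where oddness of $n$ is used.

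For the lower bound, I would note that every vertex of $K_n$ has degree $n-1$. Fix any vertex $v$; then $v$ together with its $n-1$ incident edges forms a set of $n$ elements that must receive pairwise distinct colors, because the edges are mutually incident and each is incident to $v$. Hence $\chi''(K_n)\geq n$. This argument requires nothing about the parity of $n$.

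For the upper bound I would give an explicit algebraic coloring on the palette $\{0,1,\ldots,n-1\}$. Identify the vertex set with $\mathbb{Z}_n$, color each edge $\{i,j\}$ with $(i+j)\bmod n$, and color each vertex $v$ with $(2v)\bmod n$. The parity hypothesis enters exactly here: since $n$ is odd, $2$ is invertible modulo $n$. I would then verify the three defining conditions of a total coloring. Two edges $\{i,j\}$ and $\{i,k\}$ sharing the vertex $i$ get colors $i+j$ and $i+k$, which agree only if $j\equiv k$, so the edge-coloring is proper. Two distinct vertices $u$ and $v$ get colors $2u$ and $2v$, which agree only if $u\equiv v$ by invertibility of $2$; since all vertices of $K_n$ are pairwise adjacent, this handles condition (i). Finally a vertex $v$, colored $2v$, and an incident edge $\{v,w\}$, colored $v+w$, clash only if $v\equiv w$, which is impossible for a genuine edge. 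Thus the coloring is valid and uses exactly $n$ colors, giving $\chi''(K_n)\leq n$.

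The main obstacle is locating the right construction for the upper bound, and the decisive observation is that the standard proper edge-coloring $\{i,j\}\mapsto i+j$ leaves, for each color $c$, exactly one uncovered vertex, namely the unique solution $v$ of $2v\equiv c\pmod n$; this missing vertex can then be legitimately assigned the very color $c$. Oddness of $n$ is precisely the condition that makes each color class a near-perfect matching omitting a single, distinct vertex, which is what allows the edge-colors and the vertex-colors to be packed into one palette of size $n$. Combining the two bounds yields $\chi''(K_n)=n$.
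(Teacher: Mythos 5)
Your proof is correct. Note, however, that the paper does not prove this lemma at all: it simply cites it as a known result from Yap's monograph on total colourings ([Yap96], p.~16), so there is no internal argument to compare against. What you have written is the standard self-contained proof: the lower bound $\chi''(K_n)\geq n$ is just the general bound $\chi''(G)\geq\Delta(G)+1$ specialized to $\Delta(K_n)=n-1$, and your upper bound is the classical cyclic construction on $\mathbb{Z}_n$, with edge $\{i,j\}$ coloured $i+j \bmod n$ and vertex $v$ coloured $2v \bmod n$. All three verification steps check out: properness of the edge colouring needs no parity assumption, while both the distinctness of vertex colours and your structural remark --- that each edge-colour class $c$ is a near-perfect matching missing exactly the vertex $v$ with $2v\equiv c \pmod n$, which is then legitimately given colour $c$ --- rest precisely on the invertibility of $2$ modulo an odd $n$, so the oddness hypothesis is used exactly where it must be. Your proof has the advantage of making the paper self-contained at the cost of a short explicit construction, and it also exhibits concretely the $n$-total-colouring $f$ of $K_n$ that the proof of Lemma~\ref{lem:G_kt} invokes, rather than appealing to its existence abstractly.
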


Let $R$ to be an arbitrary routing in $G_{k,t}$.
In what follows, we will define a $(kt^2)$-coloring $\phi$ on $Q(R)$.
Since $k$ is an odd positive integer, by Lemma~\ref{lem:total-coloring}, we can find $f$ to be a $k$-total-coloring on a complete graph of vertex set $\{v_1,v_2,\ldots,v_k\}$ with color set $\{1,2,\ldots,k\}$.
Let $e_{\{i,j\}}$ denote the edge with endpoints $v_i$ and $v_j$.
For $1\leq i\leq k$ let $\mathcal{P}_i\subset R$ denote the set of paths in $R$ whose terminal vertices are both in $V(G_i)\cup\{r\}$; and, for $1\leq i<j\leq k$, let $\mathcal{P}_{(i,j)}\subset R$ denote the set of paths in $R$ having one terminal vertex in $V(G_i)$ and the other in $V(G_j)$.
It is easy to see that $$R=\biguplus_{1\leq i\leq k}\mathcal{P}_i\cup\biguplus_{1\leq i<j\leq k}\mathcal{P}_{(i,j)}.$$ 
Let $C=C_1\uplus C_2\uplus\cdots\uplus C_k$ be a collection of $kt^2$ colors, where each $C_i$ contains exactly $t^2$ of them.
Then, the objective coloring $\phi$ of $Q(R)$ is defined as follows.
\begin{enumerate}[(i)]
\item For $1\leq i\leq k$, each path in $\mathcal{P}_i$ receives a distinct color from $C_{f(v_i)}$; and
\item For $1\leq i< j\leq k$, each path in $\mathcal{P}_{(i,j)}$ receives a distinct color from $C_{f(e_{\{i,j\}})}$.
\end{enumerate}
$\phi$ is well-defined because $|\mathcal{P}_{(i,j)}|=t^2$ and  $|\mathcal{P}_i|={t+1\choose 2}\leq t^2$ due to $k\geq 1$.
Moreover, since $f$ is a $k$-total-coloring, any two paths in $R$ receive the same color under $\phi$ have no common edge(s).
Hence $\phi$ is a $(kt^2)$-coloring of $Q(R)$, and thus
\begin{equation}
w(G_{k,t})\leq kt^2.
\label{eq:G_kt_upper}
\end{equation}
The result follows by combining \eqref{eq:G_kt_lower} and \eqref{eq:G_kt_upper}.
\qed

%When $h=1$, $T_{m,1}$ is a star with center $r$ and leaves $r_1,r_2,\ldots,r_m$.
%Then, we define the symmetric global packing $\phi$ as follows.
%\begin{itemize}
%\item For $1\leq i\leq m$, $\phi(P_{\{r,r_i\}})=f(v_i)$.
%\item For $1\leq i\neq j\leq m$, $\phi(P_{\{r_i,r_j\}})=f(e_{\{i,j\}})$.
%\end{itemize}
%One can check that $\phi$ is a symmetric global packing since $f$ is a proper total coloring.
%Now, consider $T_{m,h}$, $h\geq 2$.
%Assume by induction hypothesis that $w(T_{m,h-1})=m(1+m+m^2+\ldots+m^{h-2})$.

%By the fact that $f$ is a $k$-total-coloring, we only need to show the number of symbols in $C_{f(v_i)}$ is sufficient to pack $\mathcal{P}_i$, since in (2), $|\mathcal{P}_{\{i,j\}}|=(1+m+m^2+\cdots+m^{h-1})^2=|C_{f(e_{\{i,j\}})}|$.
%Observe that $\mathcal{P}_i$ consists of all paths in $H_i$ and all paths  with $r$ as one terminal vertex and the other in $H_i$.
%By induction hypothesis, all paths in $H_i$ can be packed by $m(1+m+m^2+\cdots+m^{h-2})^2$ symbols.
%Then, there are 
%\begin{align*}
%&m(1+m+m^2+\ldots+m^{h-1})^2-m(1+m+m^2+\ldots+m^{h-2})^2 \\
%=& m^h(2+2m+2m^2+\ldots+2m^{h-2}+m^{h-1})
%\end{align*}
%symbols left in $C_{f(v_i)}$, which is obviously larger than $1+q+q^2+\cdots+q^{h-1}$, the number of remaining paths.
%Hence the result follows.
%\qed

%%%%%%%%%%%%%%%%%%%%%%%%%%%%%%%%%%%%%%%%%%%%%%%
\subsection{Proof of Theorem~\ref{thm:m-ary}(iv)}
%%%%%%%%%%%%%%%%%%%%%%%%%%%%%%%%%%%%%%%%%%%%%%%

In this case, $m$ is even and $m\geq 4$.
The lower bound of $w(T_{m,h})$ can be obtained directly from the Edge-cut Bound by choosing $\{e_1\}$ to be the edge cut.
In what follows, we will inductively define a coloring $\phi$ on $Q(R)$ using $m^h(1+m+m^2+\ldots+m^{h-1})$ colors.

Let $D_{m,h}$ be a tree obtained from two $T_{m,h-1}$'s by connecting the two roots, and let $A$ denote the left copy of $T_{m,h-1}$ and $B$ denote the right one.
Denote the vertices of subtrees $A$ and $B$ in the same way as in $T_{m,h-1}$ but replacing letter $r$ by $a$ and $b$, respectively.
Also, denote by $A_w$ ($B_w$, resp.) the subtree rooted by $a_w$ ($b_w$, resp.) for any $w$.
See Fig.~\ref{fig:D42} for an example of $D_{4,2}$.

\begin{figure}[h]
\centering
\includegraphics[width=3in]{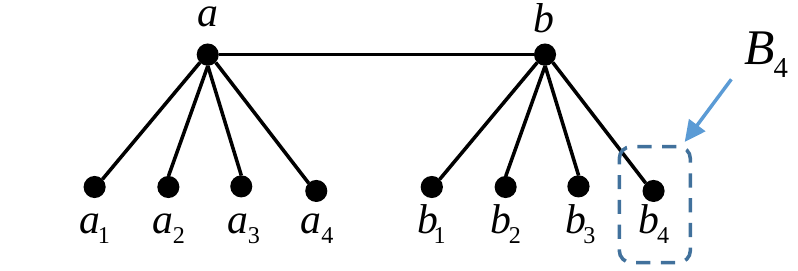}
\caption{An illustration of $D_{4,2}$.} \label{fig:D42}
\end{figure}

In what follows, for any even number $m\geq 4$, we will show both
\begin{equation}\label{eq:D_mh}
w(D_{m,h})\leq (1+m+m^2+\cdots+m^{h-1})^2
\end{equation}
and
\begin{equation}\label{eq:T_4mh}
w(T_{m,h})\leq m^h(1+m+m^2+\ldots+m^{h-1})
\end{equation}
by induction on $h$.
We refer $R^D$ to the unique routing in $D_{m,h}$, as $R$ refers to that in $T_{m,h}$.

As it is obviously that \eqref{eq:D_mh} holds for $h=1$, we consider $w(T_{m,1})$.
%Recall that $\chi'(K_m)=m-1$ for any even $m$, where $\chi'(G)$ refers to the \emph{chromatic index} of $G$, standing the minimum number 
Recall that $K_m$ is $(m-1)$-edge-colorable.
Here, a $k$-edge-coloring of a graph $G$ is a mapping from $E(G)$ into a set of colors $\{1,2,\ldots,k\}$ such that incident edges in $C$ receive distinct colors.
Let $f$ be an $(m-1)$-edge-coloring on a complete graph of vertex set $\{v_1,v_2,\ldots,v_m\}$ with color set $\{1,2,\ldots,m-1\}$.
Let $e_{i,j}$ denote the edge with endpoints $v_i$ and $v_j$.
Then, the coloring $\phi$ of $Q(R)$ for $T_{m,1}$ can be given as follows.
\begin{itemize}
\item For $1\leq i\leq m$, let $\phi(P_{\{r,r_i\}})=m$.
\item For $1\leq i,j\leq m$, $i\neq j$, let $\phi(P_{\{r_i,r_j\}})=f(e_{i,j})$.
\end{itemize}
Hence \eqref{eq:T_4mh} holds for $h=1$.

\medskip

When $h\geq 2$, we first consider $D_{m,h}$.
Partition $R^D$ into the following two parts.
\begin{enumerate}[(i)]
\item $\mathcal{P}_1:=$ the collection of paths having terminal vertices in $V(D_{m,h})\setminus\{a,b\}$; and
\item $\mathcal{P}_2:=$ the collection of paths having at least one terminal vertex is $a$ or $b$.
\end{enumerate} 
Fix $1\leq i,j\leq m$.
Consider the set of paths having one terminal vertex in $V(A_i)$ and the other in $V(B_j)$, and denote it by $\mathcal{P}_{i,j}$.
Since each path in $\mathcal{P}_{i,j}$ contains the edge $\{a,b\}$, it needs $|\mathcal{P}_{i,j}|=(1+m+m^2+\cdots+m^{h-2})^2$ colors for $\mathcal{P}_{i,j}$.
Let $C_{i,j}$ be the set of those colors.
Observe that, $C_{i,j}$ can be used to color all the paths having one terminal vertex in $V(A_{i_1})$ (or $V(B_{j_1})$) and the other in $V(A_{i_2})$ (or $V(B_{j_2})$), whenever $i\notin\{i_1,i_2\}$ (or $j\notin\{j_1,j_2\}$).
Moreover, $C_{i,j}$ can also be used to color all the paths having both terminal vertices in $V(A_{i'})$ (or $V(B_{j'})$) if $i'\neq i$ (or $j'\neq j$), since in $A_{i'}$ (or in $B_{j'}$) a routing contains $${1+m+m^2+\cdots+m^{h-2}\choose 2} \leq (1+m+m^2+\cdots+m^{h-2})^2=|C_{i,j}|$$ paths.
By going through all $1\leq i,j\leq m$, it is not hard to verify that all paths in $\mathcal{P}_1$ can be colored by $m^2(1+m+m^2+\cdots+m^{h-2})^2$ colors; that is,
\begin{equation}
\chi(Q(\mathcal{P}_1)) \leq m^2(1+m+m^2+\cdots+m^{h-2})^2.
\label{eq:D_mh-1}
\end{equation}
When it comes to $\mathcal{P}_2$, the path $P_{a,a_w}$ can receive the same color as $P_{a,b_w}$, while the path $P_{b,b_w}$ can receive the same color as $P_{b,a_w}$.
So, we have
\begin{equation}
\chi(Q(\mathcal{P}_2)) \leq 2m(1+m+m^2+\cdots+m^{h-2})+1,
\label{eq:D_mh-2}
\end{equation}
which is equal to the number of paths in $\mathcal{P}_2$ passing through the edge $\{a,b\}$.
Hence \eqref{eq:D_mh} can be derived by combining \eqref{eq:D_mh-1} and \eqref{eq:D_mh-2}.

%For $1\leq i,j\leq m$, let $\mathcal{P}_{(i,j)}\subset\mathcal{P}_1$ denote the set of paths having one terminal vertex in $V(A_i)$ and the other in $V(B_j)$.
%Each path in $\mathcal{P}_{(i,j)}$ contains the edge $\{a,b\}$, so it needs $|\mathcal{P}_{(i,j)}|=(1+m+m^2+\cdots+m^{h-2})^2$ colors for $\mathcal{P}_{(i,j)}$.
%Obviously, the paths having one terminal vertex in $V(A_{i_1})$ (or $V(B_{j_1})$) and the other in $V(A_{i_2})$ (or $V(B_{j_2})$) can receive the same color as in $\mathcal{P}_{(i,j)}$, whenever $i\neq i_1\neq i_2$ (or $j\neq j_1\neq j_2$).
%Moreover, for $1\leq i\leq m$, since $A_{i}$ or $B_{i}$ is isomorphic to $T_{m,h-2}$, the paths having both terminal vertices in $V(A_{i})$ (or $V(B_{i})$) can be colored with the same number of colors by the induction hypothesis that $$w(T_{m,h-2})\leq m^{h-2}(1+m+m^2+\cdots+m^{h-3}) < (1+m+m^2+\cdots+m^{h-2})^2,$$
%for $i'\neq i$ (or $j'\neq j$).

Finally, we consider $T_{m,h}$.
%For $1\leq i\neq j\leq m$ let $\mathcal{P}_{\{i,j\}}$ denote the set of paths having both terminal vertices in $V(H_i)\cup V(H_j)$.
%Note that $\mathcal{P}_{\{i,j\}}\cap\mathcal{P}_{\{i',j'\}}=\emptyset$ if and only if $\{i,j\}\cap\{i',j'\}=\emptyset$.
For $1\leq i\leq m$ let $\mathcal{P}_i$ denote the set of paths whose terminal vertices are both in $V(H_i)$; and, for $1\leq i\neq j\leq m$ let $\mathcal{P}_{\{i,j\}}$ denote the set of paths having one terminal vertex in $V(H_i)$ and the other in $V(H_j)$.
Note that $$\mathcal{P}=\mathcal{H}\cup\biguplus_{1\leq i\leq m}\mathcal{P}_i\cup\biguplus_{1\leq i\neq j\leq m}\mathcal{P}_{\{i,j\}},$$ where $\mathcal{H}$ denote the paths having $r$ as a terminal vertex.
Let $C=C_1\uplus C_2\uplus\cdots\uplus C_{m-1}$ be a collection of mutually disjoint color sets, where each $C_i$ contains exactly $(1+m+m^2+\cdots+m^{h-1})^2$ colors.
Recall that $f$ is an $(m-1)$-edge-coloring on $\{v_1,v_2,\ldots,v_m\}$ with color set $\{1,2,\ldots,m-1\}$.
Then, for pairs $\{i,j\}$ satisfying $f(e_{\{i,j\}})=1$, the paths in $\mathcal{P}_{\{i,j\}}\cup\mathcal{P}_i\cup\mathcal{P}_j$ can be colored by the colors in $C_1$.
This can be done because $\mathcal{P}_{\{i,j\}}\cup\mathcal{P}_i\cup\mathcal{P}_j$ is identical to $R^D$, the routing defined in $D_{m,h}$, whose optical index is not larger than $|C_1|$ by \eqref{eq:D_mh}.
For $k=2,3,\ldots,m-1$, if $f(e_{\{i,j\}})=k$, then color the paths in $\mathcal{P}_{\{i,j\}}$ by the colors in $C_k$.
Finally, by coloring paths $P_{\{r,r_{i,w}\}}$ and $P_{\{r,r_{j,w}\}}$ with the same color for $i\neq j$, we can use $1+m+m^2+\ldots+m^{h-1}$ colors for the coloring of paths in $\mathcal{H}$. 
Hence we have 
\begin{align*}
w(T_{m,h}) &\leq (m-1)(1+m+m^2+\ldots+m^{h-1})^2 + (1+m+m^2+\ldots+m^{h-1}) \\
&= m^h(1+m+m^2+\ldots+m^{h-1}),
\end{align*}
as desired.
\qed

%%%%%%%%%%%%%%%%%%%%%%%%%%%%%%%%%%%%%%%%%%%%%%%
%%%%%%%%%%%%%%%%%%%%%%%%%%%%%%%%%%%%%%%%%%%%%%%
\section{The gap between $w$ and $\pi$}\label{sec:w-pi}
%%%%%%%%%%%%%%%%%%%%%%%%%%%%%%%%%%%%%%%%%%%%%%%

Recall that $G_{k,t}$ is a simple graph with $r$ as a cut vertex such that $G_{k,t}-r$ has $k$ components having the same number $t$ of vertices.
In this section we restrict the graph $G_{k,t}$ to a be a tree, and use the notation $G^T_{k,t}$ to emphasize the tree structure. 
The $k$ components in $G^T_{k,t}-r$ are denoted by $G^T_1, G^T_2,\ldots, G^T_k$ accordingly.

\begin{proposition}
Let $k\geq 2$ and $t$ be two positive integers.
We have $$\pi(G^T_{k,t})=(k-1)t^2+t.$$
\label{pro:GT_kt}
\end{proposition}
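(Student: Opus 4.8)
The plan is to exploit the fact that a tree admits a unique path between any two vertices, so that $G^T_{k,t}$ carries a unique routing $R$ and $\pi(G^T_{k,t})$ equals the maximum edge-load of that single routing; the minimum over routings in the definition of $\pi$ is then automatic. The proof splits into a matching lower and upper bound, with the whole tree having $kt+1$ vertices ($r$ together with the $k$ components of size $t$).

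For the lower bound, I would apply the Edge-cut Bound (Proposition~\ref{pro:edge_cut}) to the cut $C=[S,\bar S]$ with $S=V(G^T_i)$ for any fixed $i$. Since $r$ is a cut vertex joined to each component by a single edge, $|C|=1$, while $|S|=t$ and $|\bar S|=(kt+1)-t=(k-1)t+1$. This immediately yields $\pi(G^T_{k,t})\ge t\big((k-1)t+1\big)=(k-1)t^2+t$.

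For the upper bound, I would compute the load of every edge of the unique routing and show none exceeds $(k-1)t^2+t$. Edges come in two types. An edge $e_i$ joining $r$ to the root of $G^T_i$ is crossed exactly by the paths with one endpoint in $G^T_i$ and the other among the remaining $(k-1)t+1$ vertices, giving load exactly $(k-1)t^2+t$. An edge $e$ lying inside some $G^T_i$ separates off a proper subtree of $G^T_i$ containing $s$ vertices with $1\le s\le t-1$, so its load is $s(kt+1-s)$. The \emph{key step} is to bound $g(s):=s(kt+1-s)$ on $1\le s\le t-1$: since $g$ is a downward parabola with apex at $s=(kt+1)/2\ge t+\tfrac12>t-1$ for $k\ge 2$, it is increasing on the relevant range, whence $g(s)\le g(t-1)=(t-1)\big((k-1)t+2\big)$. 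A short computation then gives $\big((k-1)t^2+t\big)-g(t-1)=(k-2)t+2>0$ for $k\ge 2$, so internal edges carry strictly smaller load (and when $t=1$ there are no internal edges, so this case is vacuous).

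Combining the two bounds gives $\pi(G^T_{k,t})=(k-1)t^2+t$. I expect the only real content to be the parabola estimate comparing the internal edges with the cut edges $e_i$; everything else follows directly from the tree routing being forced and from the Edge-cut Bound. One point worth stating explicitly is that the argument is independent of the internal shape of each component $G^T_i$, since both the load $(k-1)t^2+t$ on $e_i$ and the bound on the internal edges depend only on vertex counts, not on how the $t$ vertices of a component are arranged.
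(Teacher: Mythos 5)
Your proposal is correct and follows essentially the same route as the paper: the Edge-cut Bound on the single edge $e_i$ gives the lower bound, and the upper bound comes from observing that an internal edge separating off $s\le t-1$ vertices has load $s(kt+1-s)$, which the increasing-parabola argument bounds strictly below $(k-1)t^2+t$. The paper phrases the upper bound as a proof by contradiction rather than a direct edge-by-edge maximum, but the computation is identical.
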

\proof
For $i=1,2,\ldots,k$ let $e_i$ denote the edge connecting $r$ and $G_i$.
By choosing the edge cut $C=\{e_i\}$, for any $i$, following the Edge-cut Bound we have
\begin{equation}
\pi(G^T_{k,t})\geq (k-1)t^2+t.
\label{eq:GT_kt-lower}
\end{equation}
Assume to the contradiction that $\pi(G^T_{k,t})>(k-1)t^2+t$, that is, there exists an edge, say $e$, passed by more than $(k-1)t^2+t$ paths in the unique routing.
By the Edge-cut Bound and \eqref{eq:GT_kt-lower}, $e$ must be in $E(G_i)$ for some $i$.
Let $A$ be the component of $G^T_{k,t}-e$ which does not contain $r$, and denote by $a$ the number of vertices in $A$.
Then, there are $(kt+1-a)a$ paths containing the edge $e$.
Note here that $1\leq a\leq t-1$.
Define a function $f(x):=(kt+1-x)x$ on the interval $[1,t-1]$.
Since $k\geq 2$, it follows that $$f'(x)=-2x+kt+1>0, \quad \text{for }x\in[1,t-1].$$
This implies the maximum number of paths containing the edge $e$ should be $$f(t-1)=(k-1)t^2+t-(k-2)t-2,$$
which is less than $(k-1)t^2+t$ for $k\geq 2$ and $t\geq 1$.
This is a contradiction to the assumption.
\qed

By Lemma~\ref{lem:G_kt} and Proposition~\ref{pro:GT_kt}, in the case when $k\geq 3$ is odd, we have
\begin{equation}
\frac{w(G^T_{k,t})}{\pi(G^T_{k,t})}=\frac{kt}{(k-1)t+1}.
\label{eq:ratio_GT_kt}
\end{equation}
When $t=1$, the ratio in \eqref{eq:ratio_GT_kt} is equal to $1$; while, as $t\to\infty$, this ratio approaches to $\frac{k}{k-1}$, whose maximal value is $\frac{3}{2}$ due to $k\geq 3$.
Let $\delta\geq 1$ be a rational number.
We call $\delta$ is \emph{feasible} if there exists a simple graph $G$ such that $$\frac{w(G)}{\pi(G)}=\delta.$$

\begin{corollary}\label{cor:feasible}
We have
\begin{enumerate}[(i)]
\item for any integer $h>0$, $\delta=1+\frac{2^{2h-2}- 2^{h+1}+1}{2^{2h}-2^h}$ is feasible; and
\item for any integers $k,t>0$, $\delta=1+\frac{t-1}{(k-1)t+1}$ is feasible.
\end{enumerate}
\end{corollary}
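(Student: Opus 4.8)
The plan is to prove both parts by the same device: for each prescribed $\delta$ I would exhibit a single explicit graph whose ratio $w/\pi$ equals $\delta$, where both $w$ and $\pi$ are already available in closed form from earlier in the paper. No new routing or colouring argument is required; everything reduces to substituting the established values and simplifying.

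For part (ii) I would take $G=G^T_{k,t}$ with $k$ odd and $k\ge 3$. Lemma~\ref{lem:G_kt} gives $w(G^T_{k,t})=kt^2$ and Proposition~\ref{pro:GT_kt} gives $\pi(G^T_{k,t})=(k-1)t^2+t$, so that
$$\frac{w(G^T_{k,t})}{\pi(G^T_{k,t})}=\frac{kt^2}{(k-1)t^2+t}=\frac{kt}{(k-1)t+1},$$
which is precisely \eqref{eq:ratio_GT_kt}. Since $kt-\bigl((k-1)t+1\bigr)=t-1$, this rewrites as $1+\frac{t-1}{(k-1)t+1}$, and feasibility follows by definition. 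The case $t=1$ collapses to $\delta=1$, which is trivially feasible.

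For part (i) I would take $G=T_{2,h}$. Theorem~\ref{thm:m-ary}(ii) supplies $w(T_{2,h})=5\cdot 2^{2h-2}-3\cdot 2^h+1$, while $\pi(T_{2,h})=2^{2h}-2^h$ by \eqref{eq:T_2-1} (equivalently Proposition~\ref{pro:GT_kt} with $k=2$ and $t=2^h-1$). A one-line subtraction gives $w(T_{2,h})-\pi(T_{2,h})=2^{2h-2}-2^{h+1}+1$, whence
$$\frac{w(T_{2,h})}{\pi(T_{2,h})}=1+\frac{w(T_{2,h})-\pi(T_{2,h})}{\pi(T_{2,h})}=1+\frac{2^{2h-2}-2^{h+1}+1}{2^{2h}-2^h},$$
as claimed.

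The substitutions themselves are routine, so the only point demanding care—and the statement's only genuine delicacy—is the range of validity of the two inputs. The closed form $w(T_{2,h})=5\cdot 2^{2h-2}-3\cdot 2^h+1$ is the $h\ge 3$ branch of Theorem~\ref{thm:m-ary}(ii), and the identity $w(G^T_{k,t})=kt^2$ rests on Lemma~\ref{lem:G_kt}, whose hypothesis forces $k$ to be odd and at least $3$; these are exactly the regimes in which the displayed $\delta$ strictly exceeds $1$ and is therefore an admissible feasible value. I would make these hypotheses explicit at the very start, so that both substitutions are legitimate, after which the two displayed identities close the argument.
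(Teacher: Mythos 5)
Your proof is correct and follows essentially the same route as the paper: part (i) is Theorem~\ref{thm:m-ary}(ii) divided by $\pi(T_{2,h})=2^{2h}-2^h$ (Proposition~\ref{pro:GT_kt} with $k=2$, $t=2^h-1$), and part (ii) is exactly \eqref{eq:ratio_GT_kt} rewritten as $1+\frac{t-1}{(k-1)t+1}$. Your insistence on making the ranges explicit ($h\ge 3$ for part (i), $k$ odd with $k\ge 3$ for part (ii)) is a genuine improvement, since the corollary as stated claims ``any $h>0$'' and ``any $k,t>0$'', which the cited inputs do not support (e.g.\ for $h=1$ the displayed $\delta$ is $0$, not the true ratio $1$).
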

\proof
By plugging $k=2$ and $t=2^h-1$ into Proposition~\ref{pro:GT_kt}, we have $\pi(T_{2,h})=2^{2h}-2^h$.
%By the same argument as in Proposition~\ref{pro:GT_kt}, we have $\pi(T_{2,h})=2^{2h}-2^h$.
Then, the first result follows by combining it with Theorem~\ref{thm:m-ary}(ii).
The second result can be obtained directly by \eqref{eq:ratio_GT_kt}, or Lemma~\ref{lem:G_kt} and Proposition~\ref{pro:GT_kt}.
\qed

We end this paper by the following two conjectures.

\begin{conjecture}
For any tree $T$, $1\leq w(T)/\pi(T)\leq\frac{3}{2}$.
%For any simple graph $G$, $1\leq w(G)/\pi(G)\leq\frac{3}{2}$.
\end{conjecture}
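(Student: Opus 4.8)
The lower bound $w(T)/\pi(T)\ge 1$ is immediate, since it holds for every graph by \eqref{eq:forwarding_bound}; the entire content is the upper bound $w(T)\le\tfrac32\pi(T)$. Because a tree carries a \emph{unique} routing $R$, the maximum clique of the conflict graph $Q(R)$ has size exactly $\pi(T)$ (the maximum edge-load), so the assertion is precisely $\chi(Q(R))\le\tfrac32\,\omega(Q(R))$ for this structured family of conflict graphs. Each non-trivial path of $R$ ``turns'' at every internal vertex it traverses, and the paths through a fixed edge form a clique; the task is to exploit this tree structure to color within a factor $\tfrac32$ of the clique number.

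The plan is to induct on $|V(T)|$ after rooting $T$ at a centroid $r$, with hanging subtrees $T_1,\dots,T_k$ of sizes $n_1,\dots,n_k$ (each at most $n/2$, where $n=|V(T)|$). Classify the paths of $R$ into (a) paths internal to a single $T_i$, (b) paths joining two distinct subtrees $T_i,T_j$ (the set $\mathcal{P}_{(i,j)}$, of size $n_in_j$), and (c) paths incident to $r$. The engine for class (b) is \textsc{Shannon}'s theorem on edge colorings of multigraphs: form the multigraph $M$ on $\{1,\dots,k\}$ in which $i$ and $j$ are joined by $n_in_j$ parallel edges, and observe that two inter-subtree paths conflict precisely when their corresponding edges of $M$ share an endpoint (they then share $e_i$ or $e_j$), while otherwise they lie in edge-disjoint subtrees. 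A proper edge coloring of $M$ is thus exactly a valid coloring of the class (b) paths, and Shannon's bound gives
\[
\chi'(M)\le\Big\lfloor\tfrac32\,\Delta(M)\Big\rfloor,\qquad \Delta(M)=\max_i n_i(n-1-n_i)<\pi(T),
\]
so the turning paths alone cost fewer than $\tfrac32\pi(T)$ colors. It is reassuring that Shannon's bound is tight exactly on a ``fat triangle'', that is, on $G^{T}_{3,t}$, which by Lemma~\ref{lem:G_kt} and Proposition~\ref{pro:GT_kt} is precisely the family realizing ratios approaching $\tfrac32$; this strongly suggests $\tfrac32$ is the correct constant.

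I would then color the class (a) paths recursively inside each $T_i$ and the class (c) paths by the pairing device used for $D_{m,h}$ in the proof of Theorem~\ref{thm:m-ary}(iv) (giving $P_{\{r,r_{i,w}\}}$ and $P_{\{r,r_{j,w}\}}$ a common color), reusing the colors of $M$ that are ``free'' at vertex $i$ wherever possible. The hard part—and the reason this remains a conjecture—is to prevent the factor $\tfrac32$ incurred at the root from \emph{compounding} with the factor $\tfrac32$ produced by the recursion on a heavy subtree. Concretely, at the heaviest subtree $T_{i^*}$ almost every color of $M$ is blocked at $i^*$, while an internal edge of $T_{i^*}$ that is heavily loaded within $T$ carries \emph{both} intra-subtree paths and the inter-subtree paths of $\mathcal{P}_{(i^*,j)}$ in a single clique of size up to $\pi(T)$; a naive induction comparing $T_{i^*}$ to its stand-alone index $\pi(T_{i^*})$ then overshoots the budget. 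Overcoming this seems to require strengthening the inductive hypothesis to a \emph{load-weighted} statement, bounding the colors needed on each subtree in terms of the actual edge-loads it inherits from $T$ rather than its intrinsic $\pi$, so that the single $\tfrac32$ slack is shared globally instead of multiplied. Establishing such a self-reinforcing invariant, together with the bookkeeping for unequal subtree sizes and for the class (c) paths, is where I expect the genuine difficulty to lie.
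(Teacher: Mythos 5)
First, note that the statement you are proving is stated in the paper as a \emph{conjecture}: the authors offer no proof, and to the best of their (and my) knowledge none is known. So there is no ``paper proof'' to compare against; what can be assessed is whether your argument closes the question, and it does not --- as you yourself acknowledge. The lower bound via \eqref{eq:forwarding_bound} is fine, and the root-level analysis is genuinely promising: modelling the inter-subtree paths as the line graph of the multigraph $M$ with $n_in_j$ parallel edges is correct (two such paths conflict iff they share $e_i$ or $e_j$, since distinct subtrees are edge-disjoint), Shannon's bound $\chi'(M)\le\lfloor\tfrac32\Delta(M)\rfloor$ with $\Delta(M)=\max_i n_i(n-1-n_i)<\pi(T)$ is correctly applied, and the observation that Shannon's extremal configuration (a fat triangle) is exactly $G^T_{k,t}$ with $k=3$, the family the paper shows attains ratio tending to $\tfrac32$, is a good sanity check on the constant.

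The genuine gap is the one you name: the induction does not close. Two specific failure modes deserve emphasis. First, the quantity you recurse on is wrong: an internal edge $e$ of a subtree $T_{i}$ carries, within $T$, both the $a(n_i-a)$ intra-subtree paths and the $a(n-n_i)$ paths of $\bigcup_j\mathcal{P}_{(i,j)}$ passing through $e$ on their way to the root, so these two classes conflict on $e$ and cannot be colored independently; comparing $T_i$ to its stand-alone $\pi(T_i)$ (which sees only the first term) gives no control over the clique of size up to $\pi(T)$ sitting on $e$. Second, even granting a load-weighted inductive hypothesis, up to $\Delta(M)$ of the $\lfloor\tfrac32\Delta(M)\rfloor$ Shannon colors are blocked at the heaviest subtree, and you have not shown that the roughly $\tfrac12\Delta(M)$ free colors plus the global slack $\tfrac32\pi(T)-\lfloor\tfrac32\Delta(M)\rfloor$ suffice for the recursive call; in unbalanced trees $\Delta(M)$ can be much smaller than $\pi(T)$ precisely because the bottleneck edge lies deep inside a subtree, which is where the budget accounting is least controlled. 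Until such a self-reinforcing invariant is formulated and proved, this remains a plausible strategy for the conjecture rather than a proof; also, your opening claim that $\omega(Q(R))=\pi(T)$ for the unique routing of a tree (a Helly-type property of edge-intersecting paths) is used implicitly and should be proved or referenced if the argument is ever completed, though nothing downstream currently depends on it.
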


\begin{conjecture}
Any rational number $\delta$ satisfying $1\leq\delta\leq\frac{3}{2}$ is feasible.
\end{conjecture}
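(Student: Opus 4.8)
The plan is to prove the statement constructively: given an arbitrary rational $\delta=p/q$ in lowest terms with $1\le\delta\le\frac{3}{2}$, I would exhibit a single simple graph $G$ with $w(G)/\pi(G)=\delta$. The families already supplied by Corollary~\ref{cor:feasible} realize only the sparse ratios $\frac{kt}{(k-1)t+1}$ and $1+\frac{2^{2h-2}-2^{h+1}+1}{2^{2h}-2^h}$, so the first task is to enlarge the supply of graphs whose indices can be evaluated \emph{exactly}. My candidate is the \emph{unbalanced} cut-vertex graph $G_{k;t_1,\dots,t_k}$: a root $r$ of degree $k$ whose removal leaves $k$ tree components of prescribed sizes $t_1,\dots,t_k$, each joined to $r$ by a single edge. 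The extra freedom in the $t_i$ is what should turn the isolated values of \eqref{eq:ratio_GT_kt} into a genuinely parametrized family, dense and flexible enough to hit every prescribed rational.

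The computation would proceed in three steps. First, I would generalize the Edge-cut argument of Proposition~\ref{pro:GT_kt}: the load forced on the root edge $e_i$ is $t_i\big(1+\sum_{j\ne i}t_j\big)$, and since each component is a tree its internal routing is forced, so one checks that the extremal edge cut is either some $e_i$ or, should a branch be very large, a balanced interior edge. This pins down $\pi(G_{k;t_1,\dots,t_k})$ exactly. Second, I would generalize Lemma~\ref{lem:G_kt}: the Vertex-cut Bound of Proposition~\ref{pro:vertex_cut} at $S=\{r\}$ gives $w\ge\frac{\sum_{i<j}t_it_j}{\lfloor k/2\rfloor}$, and for odd $k$ I would match it by the total-colouring recipe of Lemma~\ref{lem:total-coloring}, now letting the colour class attached to the edge $e_{\{i,j\}}$ of $K_k$ hold the $t_it_j$ cross paths and the class attached to $v_i$ absorb the ${t_i+1\choose 2}$ internal paths of branch $i$. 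Third, I would solve the resulting rational equation $w/\pi=p/q$ for admissible integer sizes $t_1,\dots,t_k$, choosing one parameter to clear denominators and the others to land inside the feasible window $\frac{2p}{3}\le q\le p$ dictated by $1\le\delta\le\frac{3}{2}$.

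The hard part is twofold. The colouring in the second step is no longer automatic once the branch sizes differ: the class sizes $t_it_j$ and ${t_i+1\choose 2}$ must all fit inside colour classes of a common size, so matching the Vertex-cut Bound exactly will require either mild balancing conditions on the $t_i$ or a non-uniform assignment of colours across the total-colouring of $K_k$, and it is precisely here that the clean identity $w=kt^2$ of Lemma~\ref{lem:G_kt} could break. I expect this exact evaluation of the optical index for unequal branches to be the main obstacle, since computing $w$ is hard in general and the only exact values currently known are those of Theorem~\ref{thm:m-ary}. A secondary difficulty is the endpoint $\delta=\frac{3}{2}$: the family \eqref{eq:ratio_GT_kt} only approaches $\frac{3}{2}$ in the limit $k=3$, $t\to\infty$, so attaining it exactly rather than as a supremum will likely demand a separate construction, perhaps an even-$k$ graph whose optical index is governed by the even-order analogue of Lemma~\ref{lem:total-coloring}. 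Once a family carrying two independent integer parameters is shown to realize $w/\pi$ across a full arithmetic pattern, a Diophantine density argument should close the gap and cover every rational in $[1,\frac{3}{2}]$.
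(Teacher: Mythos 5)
The statement you are addressing is posed in the paper as an open \emph{conjecture}: the authors give no proof of it, and the only evidence they supply is the countable family of feasible ratios in Corollary~\ref{cor:feasible}. What you have written is a research plan rather than a proof, and each of its three steps contains a genuine gap. First, you never actually determine $w$ and $\pi$ for the unbalanced family $G_{k;t_1,\dots,t_k}$; you yourself flag that the total-colouring construction of Lemma~\ref{lem:G_kt} may fail to meet the Vertex-cut Bound when the branch sizes differ, and in general it will: the number of colours consumed by the class attached to a single total-colour of $K_k$ is the \emph{maximum} of the quantities $t_it_j$ (resp.\ $\binom{t_i+1}{2}$) over the vertices and edges in that class, and the sum of these maxima exceeds $\sum_{i<j}t_it_j\big/\lfloor k/2\rfloor$ unless the $t_i$ are essentially equal --- which is precisely the balanced case already settled. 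Without an exact value of $w$ for some genuinely new family, no new ratios are certified feasible.

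Second, the closing ``Diophantine density argument'' is not an argument: realizing \emph{every} rational in a closed interval as a ratio of two specific integer-valued functions of the parameters is a nontrivial number-theoretic claim that you neither formulate precisely nor prove, and the balanced family only yields the set $\{kt/((k-1)t+1)\}$, which is far from exhausting $[1,\tfrac{3}{2}]\cap\mathbb{Q}$. Third, the endpoint $\delta=\tfrac{3}{2}$ is strictly unattained by every member of that family, since $kt/((k-1)t+1)<k/(k-1)\le\tfrac{3}{2}$; your proposed ``even-order analogue'' for this case is only a hope, not a construction. In short, the proposal identifies a plausible direction of attack but establishes nothing beyond what Corollary~\ref{cor:feasible} already gives, and the conjecture remains open.
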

\medskip

%%%%%%%%%%%%%%%%%%%%%%%%%%%%%%%%%%%%%%%%%%%%%%%
%%%%%%%%%%%%%%%%%%%%%%%%%%%%%%%%%%%%%%%%%%%%%%%
%\section{Acknowledgements}
\noindent\textbf{Acknowledgements.}
The authors would like to express their gratitude to the referee for his or her valuable comments and suggestions in improving the presentation of this paper.
%This work was supported in part by the National Natural Science Foundation of China under grant numbers 61301107 and 11601454, the Natural Science Foundation of Fujian Province of China under grant number 2016J05021, the Fundamental Research Funds for the Central Universities in China under grant number 20720150210, the Ministry of Science and Technology, Taiwan under grant number 104-2115-M-009-009, and the open research fund of National Mobile Communications Research Laboratory, Southeast University, under grant number 2017D09.

%%%%%%%%%%%%%%%%%%%%%%%%%%%%%%%%%%%%%%%%%%%%%%%
%%%%%%%%%%%%%%%%%%%%%%%%%%%%%%%%%%%%%%%%%%%%%%%
%\section{Conclusion}\label{sec:conclusion}
%%%%%%%%%%%%%%%%%%%%%%%%%%%%%%%%%%%%%%%%%%%%%%%

%%%%%%%%%%%%%%%%%%%%%%%%%%%%%%%%%%%%%%%%%%%%%%%
%%%%%%%%%%%%%%%%%%%%%%%%%%%%%%%%%%%%%%%%%%%%%%%
\rm
\bigskip

%%%%%%%%%%%%%%%%%%%%%%%%%%%%%%%%%%%%%%%%%%%%%%
%%%%%%%%%%%%%%%%%%%%%%%%%%%%%%%%%%%%%%%%%%%%%%
\end{document}